\documentclass[reqno, 12pt ,a4letter]{amsart}
\usepackage{amsmath,amsxtra,amssymb,latexsym, amscd,amsthm}
\usepackage[pdftex,pdfpagelabels]{hyperref}
\usepackage{graphicx,color}
\usepackage[utf8]{inputenc}
\usepackage[mathscr]{eucal}
\usepackage{mathrsfs}
\usepackage[english]{babel}
\usepackage{varioref}
\usepackage{moreverb,graphicx}
\usepackage{enumerate,array}
\usepackage{multirow}
\usepackage{mathpazo}

\newtheorem {theorem}{Theorem}[section]

\newtheorem {lemma}[theorem]{Lemma}
\newtheorem {claim}{Claim}[section]

\theoremstyle{definition}
\newtheorem {definition}[theorem]{Definition}
\newtheorem {remark}[theorem]{Remark}
\newtheorem {example}[theorem]{Example}
\newtheorem {algorithm}{Algorithm}[section]

\newcommand{\ord}{\operatornamewithlimits{ord}}

\def\ees{{\accent"5E e}\kern-.385em\raise.2ex\hbox{\char'23}\kern-.08em}
\def\EES{{\accent"5E E}\kern-.5em\raise.8ex\hbox{\char'23 }}
\def\ow{o\kern-.42em\raise.82ex\hbox{
\vrule width .12em height .0ex depth .075ex \kern-0.16em \char'56}\kern-.07em}
\def\OW{O\kern-.460em\raise1.36ex\hbox{
\vrule width .13em height .0ex depth .075ex \kern-0.16em \char'56}\kern-.07em}

\title{On the separation \L ojasiewicz exponents of real analytic sets in the real plane}

\author{PHI-D{U}NG HO{A}NG$^\dag$}
\address{$^\dag$ Department of Mathematics - Faculty of Fundamental Sciences,
\newline \indent  Posts and Telecommunications Institute of Technology,
\newline \indent 
Km10 Nguyen Trai Rd., Ha Dong District, Hanoi, Vietnam}
\email{dunghp@ptit.edu.vn}

\author{HONG-DUC NGUYEN$^\ddag$}
\address{$^\ddag$TIMAS, Thang Long University, Nghiem Xuan Yem, Hanoi, Vietnam}
\email{duc.nh@thanglong.edu.vn}

\thanks{The first and second author's research is funded by Vietnam National Foundation for Science and Technology Development (NAFOSTED) under grant number 101.04-2024.09.}

\keywords{\L ojasiewicz exponents, separation \L ojasiewicz inequality, Newton polygon, Newton--Puiseux roots, effective \L ojasiewicz inequality}

\subjclass{14H20, 14B05, 32B05, 58K05}

\begin{document}
\maketitle

\begin{abstract}
The main aim of the paper is to give a formula for computing the separation \L ojasiewicz exponents for two real analytic set germs via the Newton--Puiseux expansions of their defining functions. Moreover, we present an effective exponent for the case of two real algebraic sets in terms of their degrees.
\end{abstract}

\section{Introduction}
The \L ojasewicz inequalities impressively appeared in the 50s years of last century to solve the question of L. Schwartz for distribution division \cite{Hormander1958,Lojasiewicz1959, Lojasiewicz1965}. They have many relations and applications to other branches of mathematics, such as research on local singularities of analytic functions \cite{Teissier1977}, the proof of Thom's Gradient Conjecture \cite{Kurdyka2000-2}, study of infinitely dimensional version in partial differential equations \cite{Colding2015}, applications in polynomial and tame optimization \cite{HaPham2017,Hoang2016}, \ldots 

Let $A, B$ be real semi-analytic subsets of an open subset $U \subseteq \mathbb{R}^n$ with $0 \in A \cap B$. Then we have the following inequality \cite{Lojasiewicz1965}:
\begin{itemize}
	\item There exist $C, r > 0$ and $\beta \ge 1$ such that 
	\begin{equation}\label{separation-inequality}
		d(x, A) + d(x, B) \geq Cd(x, A \cap B)^\beta, \text{for all}\ \|x\| \leq r,
	\end{equation}
    where $d(x, X)$ is the Euclidean distance of $x \in \mathbb{R}^n$ to the set $X$ ($d(x, X) = 1$ if $X = \emptyset$).
	\item The infimum of such exponents $\beta$ is called {\em separation \L ojasiewicz exponent} of semi-analytic subsets $A$ and $B$ (at the origin), it is denoted by $\mathscr{L}(A,B)$.
\end{itemize} 
Note that the separation \L ojasiewicz exponent $\mathscr{L}(A, B)$ is a rational number \cite[Corollary 2]{Bochnak1975}. The \L ojasiewicz exponent represents and compares the growths of functions on two sides of the inequalities \cite{Lojasiewicz1965}.  

The \L ojasiewicz exponents are used to study some important problems, such as computing some topological invariants \cite{Nguyen2019,Risler1997,Teissier1977}, investigating the singularity at infinity \cite{Ha1990}, the study of the separation of real algebraic sets \cite{Cygan1999,Kurdyka2014} proof of effective Nullstellensatz \cite{Jelonek2005,Ji1992}, etc. There are many works providing computations and estimations of \L ojasiewicz exponents \cite{Gwozdziewicz1999,HD08,HD10,Acunto2005,Kurdyka2014,Nguyen2019,Pham2012}. 

In this paper, we study the separation \L ojasiewicz exponents of the zeros of the real analytic functions in two variables. More precisely, assuming $f$ and $g$ are real analytic functions in two variables, we give a formula to compute the separation \L ojasiewicz exponent $\mathscr{L}(f^{-1}(0),g^{-1}(0))$ in \eqref{separation-inequality} in terms of the approximations of the Newton--Puiseux roots of $f\cdot g$ (see Definition \ref{approximation} and Theorem \ref{separation-theorem}). If $f$ and $g$ are polynomials, then we give an effective formula for the separation \L ojasiewicz exponent. 


\section{Preliminaries} \label{Section2}

\subsection{Newton polygon relative to an arc}\label{subsection-key-lemmas}\quad\\
In this section, we recall the technique of the Newton polygon relative to an arc or sliding-technique due to Kuo-Parusinski \cite{Kuo2000} (see also \cite{HD10}). This technique plays a key role in this article. Let $f: (\mathbb{K}^2,0) \to (\mathbb{K},0)$ ($\mathbb{K}$ is $\mathbb{C}$ or $\mathbb{R}$) be an analytic function germ. Suppose that $f$ is mini-regular in $x$ of order $m$, i.e. in the Taylor expansion of $f$: $$ f(x,y) = f_m(x,y) + f_{m+1}(x,y) + \dots, $$ we have $f_m(1,0) \ne 0$, where $f_k(x,y)$ is the homogeneous component of degree $k$. 

For a {\em Puiseux series}
\begin{eqnarray*}
	x = \phi(y) = c_1y^{n_1/N} + c_2y^{n_2/N}+ \cdots \in \mathbb{K}\{y^{1/N}\}
\end{eqnarray*}
with $N \le n_1 < n_2 < \cdots $ being positive integers, where $c_1 \ne 0$. We define the order of a Puiseux series $x=\phi(y)$:
$$ \mathrm{ord}\phi = \frac{n_1}{N}. $$
The series $\phi$ is said to be real if all coefficients $c_i$ of $\phi$ are real. Let us define 
$$M(X, Y) := f(X + \phi(Y), Y) := \sum c_{ij}X^iY^{j/N}.$$
For each  $c_{ij} \ne 0,$ let us plot a dot at $(i, j/N)$ in $\Bbb R^2$, and call it a {\em Newton dot}. The set of Newton dots is called the {\em Newton diagram}, They generate a convex hull, whose boundary is called the {\em Newton polygon of $f$ relative to $\phi,$}  to be denoted by $\mathbb{P}(f, \phi).$ Note that this is the Newton polygon of $M$ in the usual sense.

\begin{algorithm}[The sliding]\quad
\begin{enumerate}
    \item[\textbf{Input.}]
An analytic function germ $f:(\mathbb{K}^2,0)\to(\mathbb{K},0)$, mini-regular in $x$, and an initial Puiseux series $\phi_0(y)$. Set $k:=0$ and $\phi_0:=\phi$.

	\item[\textbf{Step 1.}] Define $$M_k(X,Y):=f(X+\phi_k(Y),Y) =\sum c^{(k)}_{ij}X^iY^{j/N}$$ and consider the Newton polygon $\mathbb{P}(f,\phi_k)$ of $M_k$. Let $(0,h_k)$ be the lowest Newton dot on $X=0$ and $E_k$ be the compact edge of $\mathbb{P}(f,\phi_k)$ having $(0,h_k)$ as a vertex. Then $$\ord f(\phi_k(y),y)=h_k.$$	
	\item[\textbf{Step 2.}]
	\begin{itemize}
	    \item If $\mathbb{P}(f,\phi_k)$ has no Newton dot on $X=0$, then $f(\phi_k(y),y)=0$ and the algorithm stops with output $$\phi_\infty(y):=\phi_k(y).$$
        \item If $\mathbb{P}(f,\phi_k)$ has a Newton dot on $X=0$, choose a nonzero root $c_k$ of the edge polynomial
	\[
	\mathcal{E}_{E_k}(z)
	:=\sum_{(i,j/N)\in E_k} c^{(k)}_{ij} z^i
	\] and set
	\[
	\phi_{k+1}(y)
	:=\phi_k(y)+c_k\,y^{\tan\theta_{E_k}},
	\]
	where $\theta_{E_k}$ is the angle of the edge $E_k$.
	\end{itemize}
	\item[\textbf{Step 3.}]
	Increase $k:=k+1$ and return to \textbf{Step 1}.
    \item[\textbf{Output.}] The sequence $\{\phi_k(y)\}_k$ has diagram $\phi_0 \rightarrow \phi_1 \rightarrow \cdots\rightarrow \phi_k \rightarrow \cdots$ which produces a Puiseux series $\phi_\infty$ satisfying
\[
f(\phi_\infty(y),y)=0.
\]
\end{enumerate}
\end{algorithm}
The series $\phi_\infty$ is called a {\it final result of the sliding of $\phi$ along $f$}. 
The above algorithm is based on the following lemma.
\begin{lemma} \label{lemma-sliding}
	Suppose that $\phi$ is not a root of $f = 0$. Consider a series 
	$$\psi : x = \phi(y) + c y^{\rho} + o(\rho),$$
	where $c \in \mathbb{K},0<\rho \in \mathbb{Q}$ and $o(\rho)$ denotes a Puiseux series of order greater than $\rho$. Then the following statements hold:
	\begin{itemize}
		\item[(i)] If $\tan \theta_{E_1} < \rho$ or $\tan \theta_{E_1}= \rho$ and $\mathcal{E}_{E_1}(c) \ne 0$ then 
		$\mathbb{P}(f, \phi) = \mathbb{P}(f, \psi),$ and, therefore
		$\mathrm{ord} f(\phi(y), y) = \mathrm{ord} f(\psi(y), y).$
		
		\item[(ii)] If $\tan \theta_{E_1}= \rho$ and $\mathcal{E}_{E_1}(c)  = 0$ then $\mathrm{ord} f(\phi(y), y) < \mathrm{ord} f(\psi(y), y)$.
	\end{itemize}
\end{lemma}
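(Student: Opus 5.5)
We compare the Newton polygons of $M(X,Y)=f(X+\phi(Y),Y)$ and $\widetilde M(X,Y)=f(X+\psi(Y),Y)$ via the substitution $X\mapsto X+cY^\rho+\eta(Y)$, where $\eta=\psi-\phi$ has leading part $cY^\rho$ and $\eta-cY^\rho$ has order $>\rho$. Indeed
$$\widetilde M(X,Y)=M\bigl(X+cY^{\rho}+\eta_1(Y),Y\bigr),\qquad \eta_1:=\eta-cY^{\rho},\ \mathrm{ord}\,\eta_1>\rho.$$
Throughout we use weighted orders: for a line of slope $\rho$ we give $X$ weight $\rho$ and $Y$ weight $1$. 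A monomial $X^iY^{j/N}$ then has weight $i\rho+j/N$. Here $E_1$ denotes the compact edge of $\mathbb{P}(f,\phi)$ with vertex $(0,h)$, where $h=\mathrm{ord} f(\phi(y),y)$. The quantity $\tan\theta_{E_1}$ is the weight ratio for which $E_1$ is the supporting line, so the weighted initial form of $M$ with weight $\tan\theta_{E_1}$ is $Y^{h}\mathcal{E}_{E_1}(X/Y^{\tan\theta_{E_1}})$, up to the standard normalization.

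The key step is to expand each monomial $c_{ij}(X+cY^\rho+\eta_1)^iY^{j/N}$ by the binomial formula. Every term produced has the form $X^{i-k}Y^{j/N+\text{(something of order}\ \ge k\rho)}$. The order is exactly $k\rho$ only for the terms coming from $(cY^\rho)^k$; the others have strictly larger order.

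\emph{Case $\tan\theta_{E_1}<\rho$.} Each dot $(i,j/N)$ produces dots $(i-k,\,j/N+(\ge k\rho))$, i.e. moved left by $k$ and up by at least $k\rho$. Every such moved dot lies on or above the line through $(i,j/N)$ with slope $-\rho$. I need all new dots to lie strictly above the polygon except at the vertices, so the argument will use the following elementary convexity fact. Since all edges of $\mathbb{P}(f,\phi)$ have $\tan\theta\le\tan\theta_{E_1}<\rho$ (the edges get less steep moving away from the $Y$-axis), each vertex $v$ of $\mathbb{P}(f,\phi)$ is the unique minimizer of some weighted order in a neighbourhood of the weights between consecutive edge slopes. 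I then show that the coefficient of each vertex is unchanged. A contribution to vertex $(i_0,j_0)$ would have to come from a dot $(i,j/N)$ with $i>i_0$ via a shift of slope $\ge\rho$. But dots to the right of $v$ lie above the line through $v$ of slope $-\tan\theta_{E'}$ for the adjacent edge $E'$, with slope less than $\rho$. Shifting such a dot back to $i_0$ raises it by at least $\rho(i-i_0)$, which lands strictly above $v$. Hence all vertices and edges persist, no new dot falls below the polygon, and $\mathbb{P}(f,\psi)=\mathbb{P}(f,\phi)$.

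\emph{Case $\tan\theta_{E_1}=\rho$.} The same argument handles every vertex except $(0,h)$, which can receive contributions from the dots on $E_1$ with shift exactly $k=i$ and $Y$-exponent increment $i\rho$. Summing these contributions, the coefficient of $X^0Y^{h}$ in $\widetilde M$ is $\mathcal{E}_{E_1}(c)$, i.e. the weighted initial form evaluated at $X=c$. This is the computation I expect to require the most care: one must check that the dots of $E_1$ all land exactly at height $h$, and that dots off $E_1$, as well as the $\eta_1$-terms, land strictly higher. If $\mathcal{E}_{E_1}(c)\neq0$, the vertex $(0,h)$ survives, the other vertices are untouched, and the polygon is unchanged, giving (i). If $\mathcal{E}_{E_1}(c)=0$, every term of $\widetilde M$ at $X=0$ has order $>h$; $f(\psi(y),y)$ is just $\widetilde M(0,y)$, so its order exceeds $h$, giving (ii).

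In both parts the equality or inequality of $\mathrm{ord} f(\cdot(y),y)$ follows because this order is the height of the lowest dot on $X=0$, as noted in Step 1 of the sliding algorithm.
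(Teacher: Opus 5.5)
Your proof is correct, but it is organized differently from the paper's. The paper gives no direct argument. It cites \cite{HD08} for the monomial case $\psi=\phi+cy^{\tan\theta_{E_1}}$ and says the general statement follows by adding the terms of $\psi-\phi$ one at a time, infinitely often. You instead substitute the whole perturbation $\eta=\psi-\phi$ at once and rely on a single convexity estimate:
\begin{itemize}
\item A dot moved by $(-k,t)$ with $t\ge k\rho$ stays in the Newton region, because every edge of $\mathbb{P}(f,\phi)$ has $\tan\theta\le\tan\theta_{E_1}\le\rho$.
\item Such a moved dot lands strictly above every vertex whose right-hand edge has $\tan\theta<\rho$.
\end{itemize}
Hence the only vertex whose coefficient can change is $(0,h)$ when $\rho=\tan\theta_{E_1}$, and that coefficient is exactly $\mathcal{E}_{E_1}(c)$.

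What your route buys is a self-contained proof that treats the tail $o(\rho)$ and the case $\tan\theta_{E_1}<\rho$ uniformly, with no limiting argument. It also avoids a real difficulty in the iterative route for (ii). After adding $cy^{\rho}$ with $\mathcal{E}_{E_1}(c)=0$, the polygon changes, and the next exponent of $\psi$ may lie below the new $\tan\theta$, where neither (i) nor (ii) applies. For example, take $f=(x-y-y^2)(x-y-y^5)$, $\phi=0$, $\psi=y+y^{3/2}$. Then $\mathbb{P}(f,y)$ has first edge with $\tan\theta=5$, the next exponent of $\psi$ is $3/2$, and the order drops from $7$ at $y$ to $3$ at $\psi$. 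So chaining the monomial case at $\phi_1=y$ gives nothing, and an extra argument is needed. Your observation settles (ii) directly: every term of $\widetilde M(0,Y)$ has order $\ge h$, and the $Y^h$-coefficient is $\mathcal{E}_{E_1}(c)$.

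Two points to make explicit when you write it up:
\begin{itemize}
\item In the equal-slope case, for the right endpoint of $E_1$, use the edge to its right (slope $<\rho$), not $E_1$ itself.
\item Justify ``no new dot falls below the polygon'' by noting that the lower boundary is convex with slopes of absolute value at most $\tan\theta_{E_1}\le\rho$.
\end{itemize}
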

\begin{proof} For a detailed proof, we refer to \cite{HD08}. In fact, the special case where $\psi : x = \phi(y) + c y^{\tan \theta_{E_1}}$ was proved in \cite[Lemma 2.1]{HD08}. The lemma is then deduced by applying the special case (infinitely) many times.
\end{proof}

We define
$$
\mathrm{ord}d(\phi, V_f) := \begin{cases}\label{order-distance}
	\max\limits_{j} \{\mathrm{ord}(\phi(y)-\beta_j(y))\},\ \text{$\beta_j(y)$ are the real Newton-Puiseux roots of $f$,}\\
	1\ \text{ if $f$ has no real roots,}
\end{cases} 	
$$
where $V_f$ denotes the zero locus $f(x,y)=0$ of $f$.
\begin{lemma}\label{distant-sliding}
Suppose that $\phi$ is not a root of $f = 0$. Then
	\begin{itemize}
		\item[(i)] $\mathrm{ord}d(\phi, V_f)\leq \tan \theta_{E_1}.$		
		\item[(ii)] Let $\psi$ be a series of form
	$$\psi : x = \phi(y) + c y^{\rho} + o(\rho),\ c \in \mathbb{R}.$$
Suppose that $\rho>\mathrm{ord}d(\psi, V_f)$, or $\rho\geq \mathrm{ord}d(\psi, V_f)$ and $c$ is generic. Then
		$$\mathrm{ord}d(\psi, V_f)= \mathrm{ord}d(\phi, V_f).$$
	\end{itemize}
\end{lemma}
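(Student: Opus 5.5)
Both parts rest on the factorization of $M(X,Y)=f(X+\phi(Y),Y)$ over Newton--Puiseux roots. Since $f$ is mini-regular in $x$ of order $m$, we may write
\[
f(x,y)=u(x,y)\prod_{j=1}^m (x-\beta_j(y)),\qquad u(0,0)\neq 0,
\]
so that $M(X,Y)=u\cdot\prod_j (X-(\beta_j(Y)-\phi(Y)))$. The roots of $M$ in $X$ are therefore $\gamma_j:=\beta_j-\phi$, and by the classical Newton polygon lemma the slopes $\tan\theta$ of the compact edges of $\mathbb{P}(f,\phi)$ are exactly the orders $\mathrm{ord}\,\gamma_j$, with multiplicity. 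The edge $E_1$, the one with $(0,h)$ as a vertex, has the largest slope, so
\[
\tan\theta_{E_1}=\max_j \mathrm{ord}(\beta_j-\phi).
\]
Here $\gamma_j\neq 0$ for all $j$, because $\phi$ is not a root.

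For (i), the maximum in the definition of $\mathrm{ord}\,d(\phi,V_f)$ runs only over the real roots, which form a subset of all the $\beta_j$. Hence it is at most $\tan\theta_{E_1}$. There is one degenerate case: $f$ has no real roots, where the convention gives the value $1$. There I will check that $\tan\theta_{E_1}\ge 1$. This holds because mini-regularity makes every root satisfy $\mathrm{ord}\,\beta_j\ge 1$, and $\mathrm{ord}\,\phi\ge 1$ since $n_1\ge N$, so each $\beta_j-\phi$ has order $\ge 1$. (If $\phi$ could have order $<1$, this case would need the convention reinterpreted, and I would note that.)

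For (ii), let $d:=\mathrm{ord}\,d(\psi,V_f)$. The key observation is the ultrametric behaviour of orders. For each real root $\beta_j$ we have $\psi-\beta_j=(\phi-\beta_j)+cy^\rho+o(\rho)$. If $\mathrm{ord}(\phi-\beta_j)<\rho$, then $\mathrm{ord}(\psi-\beta_j)=\mathrm{ord}(\phi-\beta_j)$. If $\mathrm{ord}(\phi-\beta_j)\ge\rho$, then $\mathrm{ord}(\psi-\beta_j)\ge\rho$, with equality when $c$ is generic: the coefficient of $y^\rho$ in $\psi-\beta_j$ is $c$ minus a fixed real number, and it vanishes for at most one value of $c$ per root, so for only finitely many values overall.

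Now suppose $\rho>d$. Then no real root satisfies $\mathrm{ord}(\psi-\beta_j)\ge\rho$, so by the second case no real root has $\mathrm{ord}(\phi-\beta_j)\ge\rho$. Hence every real root falls in the first case, the orders agree root by root, and the two maxima coincide.

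Suppose instead $\rho\ge d$ with $c$ generic. Then every real root with $\mathrm{ord}(\phi-\beta_j)\ge\rho$ has $\mathrm{ord}(\psi-\beta_j)=\rho$. Because $\rho\ge d$, this forces $\rho=d$ and hence $\mathrm{ord}(\phi-\beta_j)\le\rho$, that is, $\mathrm{ord}(\phi-\beta_j)=\rho$. The remaining real roots have equal orders for $\phi$ and $\psi$. Together these show the maxima agree.

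The main obstacle is bookkeeping rather than ideas. One must handle the no-real-roots convention consistently; when $f$ has no real roots the equality in (ii) is trivial, both sides being $1$. One must also state precisely what "generic" means: $c$ avoids the finitely many coefficients of $y^\rho$ in $\beta_j-\phi$. The hypothesis is phrased in terms of $\psi$, which is slightly circular, but the case analysis above handles this directly without invoking Lemma \ref{lemma-sliding}.
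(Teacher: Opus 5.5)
Part (i) and the first alternative of (ii) are correct. For (i) you argue directly from the Newton--Puiseux factorization of $M$, rather than by contradiction through Lemma \ref{lemma-sliding} as the paper does, and you also treat the no-real-root convention, which the paper omits.

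The gap is in the second alternative of (ii), at the step ``this forces $\rho=d$ and hence $\mathrm{ord}(\phi-\beta_j)\le\rho$''. The equality $\rho=d=\mathrm{ord}\,d(\psi,V_f)$ bounds $\mathrm{ord}(\psi-\beta_j)$ from above. It says nothing about $\mathrm{ord}(\phi-\beta_j)$: a real root may lie very close to $\phi$, and the generic term $cy^\rho$ is exactly what pushes $\psi$ away from it. This step cannot be repaired, because with the hypothesis read literally in terms of $\psi$, the second alternative is false. Take $f=x-y$, $\phi(y)=y+y^{10}$ (not a root), and $\psi(y)=y+cy^{2}+y^{10}$ with $c\neq 0$ generic, so $\rho=2$. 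The only real root is $\beta(y)=y$, hence $\mathrm{ord}\,d(\psi,V_f)=2\le\rho$, while $\mathrm{ord}\,d(\phi,V_f)=10$.

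The hypothesis has to be placed on $\phi$: either $\rho>\mathrm{ord}\,d(\phi,V_f)$, or $\rho\ge\mathrm{ord}\,d(\phi,V_f)$ with $c$ generic. This is the inequality the paper's proof actually uses (``$\mathrm{ord}(\psi-\beta)\le\mathrm{ord}\,d(\phi,V_f)\le\rho$''). It is also what the later applications supply: in Lemma \ref{claim1}, part (i) gives $\mathrm{ord}\,d(\phi,V_g)\le\tan\theta_{G_1}=\rho$. Under that reading, your ultrametric case analysis closes the argument at once:
\begin{itemize}
\item no real root has $\mathrm{ord}(\phi-\beta_j)>\rho$;
\item roots with $\mathrm{ord}(\phi-\beta_j)<\rho$ keep their order;
\item a root with $\mathrm{ord}(\phi-\beta_j)=\rho$ and leading coefficient $a_j$ still gives order $\rho$ whenever $c\neq -a_j$.
\end{itemize}
You noticed that the $\psi$-phrasing was ``slightly circular''. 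The right conclusion was that, in the generic case, it makes the claim false rather than harmless.
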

\begin{proof}
(i) Assume by contradiction that $\rho:=\mathrm{ord}d(\phi, V_f)> \tan \theta_{E_1}$, then taking a real Newton-Puiseux root $\beta(y)$ such that $\mathrm{ord}d(\phi, V_f)=\mathrm{ord}(\phi(y)-\beta(y))$ we obtain
$$\beta(y)= \phi(y) + c y^{\rho} + o(\rho), c\neq 0.$$
By Lemma \ref{lemma-sliding}, $\mathrm{ord} f(\phi(y), y) = \mathrm{ord} f(\beta(y), y)=+\infty$, which is a contradiction.

(ii) Let $\beta$ be any Newton-Puiseux root of $f$. Then
$$\psi(y)-\beta(y)=\phi(y)-\beta(y)+ c y^{\rho} + o(\rho).$$
Since $\mathrm{ord}\left(\psi(y)-\beta(y)\right)\leq \mathrm{ord}d(\phi, V_f)\leq \rho$, it follows that
$$\mathrm{ord}\left(\psi(y)-\beta(y)\right)=\mathrm{ord}\left(\phi(y)-\beta(y)\right).$$
Therefore
$$\mathrm{ord}d(\psi, V_f)=\mathrm{ord}d(\phi, V_f).$$
\end{proof}
The following lemma is similar to \cite[Proposition 2.3]{HD10}.
\begin{lemma}\label{distance}
	Assume that $\mathrm{ord}\phi(y) \ge 1$. Then
	$$ d( (\phi(y),y), V_f ) \simeq |y|^{\mathrm{ord}d(\phi, V_f)}, \text{ as } y\to 0$$

\end{lemma}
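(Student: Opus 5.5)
We prove the two inequalities $d((\phi(y),y),V_f)\lesssim |y|^{\rho}$ and $d((\phi(y),y),V_f)\gtrsim |y|^{\rho}$ separately, where $\rho:=\mathrm{ord}d(\phi,V_f)$. Throughout, $y$ is real and small, and $y>0$ and $y<0$ are treated separately. On each side the real Newton--Puiseux roots $\beta_1,\dots,\beta_s$ of $f$, with $\beta_j$ real, parametrize the real half-branches of $V_f$ near $0$ as graphs $x=\beta_j(y)$. This uses that $f$ is mini-regular in $x$: every real branch is transverse to $\{y=0\}$, except for the point $0$ itself. We first treat the case where $f$ has real roots; the convention $\rho=1$ for the empty case is handled at the end.

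\textbf{Upper bound.} Choose $j$ with $\mathrm{ord}(\phi-\beta_j)=\rho$. The point $(\beta_j(y),y)$ lies on $V_f$, so
\[
d((\phi(y),y),V_f)\le|\phi(y)-\beta_j(y)|\simeq|y|^{\rho}.
\]
If $f$ has no real roots, then $V_f=\{0\}$ near the origin, and $d=\|(\phi(y),y)\|\simeq|y|$ because $\mathrm{ord}\phi\ge1$. This matches the convention $\rho=1$, and it is the reason for the hypothesis $\mathrm{ord}\phi\ge 1$.

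\textbf{Lower bound.} This is the main step. Let $(x',y')\in V_f$ realize the distance, so that $\|(x',y')-(\phi(y),y)\|=d\le C|y|^{\rho}$. Since $\mathrm{ord}\phi\ge1$ we have $\rho\ge1$, hence $|y'-y|\le C|y|^{\rho}$ and $y'$ is comparable to $y$ with the same sign. Then $x'=\beta_j(y')$ for some $j$, or $(x',y')=0$, which gives $d\ge\|(\phi(y),y)\|\ge|y|\ge|y|^{\rho}$. We estimate
\[
d\ge|\phi(y)-\beta_j(y')|-|y-y'|\ge |\phi(y)-\beta_j(y)|-|\beta_j(y)-\beta_j(y')|-|y-y'|.
\]
The roots have $\mathrm{ord}\beta_j\ge 1$ by mini-regularity, so $\beta_j$ is Lipschitz near $0$ on each side. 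This gives $|\beta_j(y)-\beta_j(y')|\le L|y-y'|\le L d$. Hence $(L+2)\,d\ge|\phi(y)-\beta_j(y)|\ge c|y|^{\rho_j}\ge c|y|^{\rho}$, where $\rho_j=\mathrm{ord}(\phi-\beta_j)\le\rho$.

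The obstacle is justifying the Lipschitz claim uniformly. A Puiseux series with exponents $\ge1$ has derivative bounded on $(0,r)$, since its terms $y^{q}$ have $q\ge1$, and the mean value theorem then applies. We also need that finitely many roots cover all of $V_f\cap\{y\ne0\}$ near $0$, which follows from the Weierstrass preparation theorem applied to $f$.
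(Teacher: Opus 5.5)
Your argument follows the paper's route: a Weierstrass/Puiseux factorization, comparison with the horizontal distances $|\phi(y)-\beta_j(y)|$ to the real roots, and the bound $\simeq|y|$ when there are no real roots. It is in fact more complete than the paper. The paper simply writes $d((\phi(y),y),V_f)=\inf_j|\phi(y)-\beta_j(y)|$, which is only an upper bound. Your nearest-point argument, with the Lipschitz estimate for roots of order $\ge 1$, is exactly what justifies the reverse inequality up to a constant.

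There is one slip in the lower bound. From $|y'-y|\le C|y|^{\rho}$ you may conclude that $y'$ has the sign of $y$ only when $\rho>1$. When $\rho=1$, which includes the empty-root convention, $C$ can exceed $1$ and the nearest point can lie on the other side.

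For instance, take $f=(x+10y)^2+y^3$ and $\phi(y)=10y$. There are no real roots for $y>0$, but $V_f$ is a cusp in $\{y\le 0\}$. The point of $V_f$ nearest to $(10y,y)$ lies in $\{y<0\}$ at distance about $2y$, whereas the origin is at distance $\sqrt{101}\,y$. So your dichotomy ``$x'=\beta_j(y')$ for a real root on the side of $y$, or $(x',y')=0$'' is incomplete. The same example shows that ``no real roots implies $V_f=\{0\}$ and $d=\|(\phi(y),y)\|$'' fails once you work one side at a time; the paper makes the same claim.

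The fix is one line. If $yy'\le 0$, then $d\ge|y-y'|\ge|y|\ge|y|^{\rho}$, since $\rho\ge 1$ and $|y|\le 1$. Otherwise $y'$ lies on the same side as $y$ with $|y'|\le(1+C)|y|$, and your Lipschitz argument applies verbatim. With this case added, the proof is complete.
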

\begin{proof}(cf. \cite[Lemma 2.16]{Kuo1974}). Let us consider the following two cases.
	
	If $f=0$ does not have any real Newton--Puiseux root, then $V_f= (0,0)$. Taking $\epsilon > 0$ small sufficiently such that $|y| < \epsilon$, we have $$d((\phi(y),y),V) = d((\phi(y),y), (0,0)) = \|(\phi(y),y)\| = \sqrt{(\phi(y))^2 + y^2}.$$ Since the assumption $\mathrm{ord}\phi(y) \ge 1$, then 
	\begin{align}\label{Eqn8}
	d((\phi(y),y),V) \simeq |y|^1\ \text{for all}\ |y| < \epsilon.	
	\end{align}
	
	If $f=0$ has some real Newton--Puiseux roots, then there exists $\epsilon > 0$ sufficiently small such that $f(x,y)$ can be represented as follows
	\begin{align*}
		f(x,y) = u(x,y)\prod_{j=1}^{k}(x - \beta_i(y)), 
	\end{align*}
	where $u(x,y)$ has no real roots and $|y| < \epsilon$. Hence
	\begin{align*}
		d( (\phi(y),y), V ) = \inf_j\|(\phi(y),y) - (\beta_j(y), y)\| = \inf_j|\phi(y) - \beta_j(y)|.
	\end{align*}
	It follows that 
	\begin{align*}
		\inf_j|\phi(y) - \beta_j(y)| \simeq |y|^{\max_j\{\mathrm{ord}(\phi(y)-\beta_j(y))\}}.
	\end{align*}
	Therefore, $d((\phi(y),y),V) \simeq |y|^{\mathrm{ord}d(\phi, V)}$  for all $|y| < \epsilon$. Combining this with \eqref{Eqn8}, we deduce the proof of the lemma.
\end{proof}

\subsection{Approximations of Puiseux series}\label{real-approx}
Let $x = \gamma(y)$ be a Puiseux series in the following form:
\begin{eqnarray*}
	\gamma(y) = a_1 y^{n_1/N} + a_2y^{n_2/N}+ \cdots + a_{s - 1} y^{n_{s - 1}/N} + c_s y^{n_s/N} +  \cdots,
\end{eqnarray*}
where $a_i \in \mathbb{R}$ and $c_s$ is the first non-real coefficient, if there is one. Let us replace $c_s$ by a generic real number $g,$ and call
\begin{equation}\label{real-approximation}
	\gamma^{\mathbb{R}}(y) := a_1 y^{n_1/N} + a_2y^{n_2/N}+ \cdots + a_{s - 1} y^{n_{s - 1}/N} + g y^{n_s/N},
\end{equation}
a {\em real approximation of $\gamma$}. In case $s = +\infty,$ let $\gamma^{\mathbb{R}} = \gamma.$

Let $f\colon (\mathbb{R}^2,0) \to (\mathbb{R},0)$ be a real analytic function germ, which is mini-regular in $x$ of order $m$. Suppose that $\phi$ is not a Newton--Puiseux root of $f = 0$ and $\gamma$ is a final result of the sliding of $\phi$ along $f$. Then
$$\gamma(y)  = \phi(y) + c y^{\tan \theta_{E_1}}+ \text{higher order terms},$$
where $\mathcal{E}_{H_{M}}(c) = 0$ and $M(X,Y) = f(X + \phi(Y),Y)$. If $c \notin \mathbb{R}$, then by the definition of the real approximation \eqref{real-approximation}, the real approximation $\gamma^{\mathbb{R}}$ of the series $\gamma$ also has the form 
\begin{equation}\label{real-approximation2}
	\gamma^{\mathbb{R}}(y) = \phi(y) + g y^{\tan \theta_{E_1}},	
\end{equation}	
where $g \in \mathbb{R}$ is generic. For $f \in \mathbb{K}\{x,y\}$ which is regular in $x$, let $\mathcal{V}_{\mathbb{R}}(f)$ be the set of all real approximations of all the Newton-Puiseux roots of $f$.
\begin{definition}[\cite{Nguyen2019}]
Let $\phi$ be a Puiseux series $\phi(y) = \sum a_iy^{\alpha_i}$. For each positive real number $\rho$, the {\em $\rho$-approximation} of $\phi(y)$ is defined by series 
$$ \sum_{\alpha_i < \rho} a_iy^{\alpha_i} + cy^\rho, $$ where $c$ is a generic real number.
\end{definition}
\begin{remark}
Let $\phi(y)$ be a non-real Puiseux series. Then the real approximation $\phi^{\mathbb{R}}(y)$ of $\phi(y)$ is the $\rho$-approximation of $\phi$, where $\rho$ is the smallest exponent occurring in $\phi$ with non-real coefficient. 
\end{remark}
\begin{definition}\label{approximation}
    Let $\phi_1, \phi_2$ be any two distinct Puiseux series. The series $\phi_{1,2}$ is called the {\em approximation of $\phi_1, \phi_2$} if it is the $\rho$-approximation of $\phi_1$ (also the $\rho$-approximation of $\phi_2$), where $\rho := \text{ord}(\phi_1 - \phi_2)$. Let $f\colon (\mathbb{R}^2,0) \to (\mathbb{R},0)$ be a real analytic function germ, which is mini-regular in $x$. Let $x=\phi_i(y), i=1,\ldots,s$ be the set of the Newton-Puiseux roots of $f$. We denote by $\mathcal{V}_a(f)$ the set of all approximations $\phi_{i,j}$ of $\phi_i$ and $\phi_j$ such that $\phi_{i,j}$ is a real Puiseux series. Note that $\mathcal{V}_{\mathbb{R}}(f) \subset \mathcal{V}_a(f)$ if $f \in \mathbb{R}\{x,y\}$.
\end{definition}

\begin{example}
	Let $\phi_1(y) = y^{\frac{3}{2}} - 2y^{\frac{5}{2}} + 3iy^{\frac{7}{2}} + \dots$ and $\phi_2(y) = y^{\frac{3}{2}} - 2y^{\frac{5}{2}} - 3y^{\frac{7}{2}} + iy^{\frac{9}{2}} + \dots$. Then, the approximation of series $\phi_1$ and $\phi_2$ is $\phi_{1,2}(y) = y^{\frac{3}{2}} - 2y^{\frac{5}{2}} + gy^{\frac{7}{2}}$, where $g \in \mathbb{R}$ and generic.
\end{example}
\section{Separation \L ojasewicz exponent of two real analytic subsets}\label{Section3}
Let $f, g \colon (\mathbb{R}^2,0) \to (\mathbb{R},0)$ be reduced real analytic functions germs in two variables, which are mini-regular in $x$ of order $m$. Put $V_f = f^{-1}(0), V_g = g^{-1}(0)$. Then, by the \L ojasiewicz inequality \eqref{separation-inequality}, there exist $C, r > 0$ and $\beta \ge 1$ such that
\begin{equation}\label{separation}
	d(x, V_f) + d(x, V_g) \geq Cd(x, V_f \cap V_g)^\beta \text{ for all}\ \|x\| \leq r.
\end{equation} 


Let $\varphi\colon [0,\varepsilon)\to \Bbb R^2$ be a semi-algebraic analytic curve such that $\varphi(0)=0$. Then there is a rational number $\ell(\varphi)$ such that
$$d(\varphi(t), V_f)+d(\varphi(t), V_g)\sim \left(d(\varphi(t), V_f \cap V_g)\right)^{\ell(\varphi)}\text{ as } t\to 0.$$
Let $\mathscr{L}(V_f, V_g)$ denote the separation \L ojasiewicz exponent of $V_f$ and $V_g$. Using the Curve Selection Lemma (see \cite{Milnor1968}), we can easily prove the following equality
\begin{eqnarray}\label{Eqn10}
	\mathscr{L}(V_f, V_g) &=& \sup_\phi \ell(\phi),
\end{eqnarray}
where the supremum is taken over all semi-algebraic analytic curves passing through the origin, which do not contain in $V_f \cap V_g$.

Suppose that the curve $\varphi$ is parametrized by a Puiseux series $x= \phi(y)$. We can see that
\begin{equation}\label{lphi-separation}
	\ell(\varphi)=\ell(\phi) := \frac{ \min\{\mathrm{ord}d(\phi, V_f), \mathrm{ord}d(\phi, V_g)\} }{\mathrm{ord}d(\phi,V_{\gcd(f,g)})}.	
\end{equation} 
We define the number
\begin{align}\label{L+_formula}
   \mathscr{L}_+(V_f, V_g) := \max\{\ell(\gamma)| \gamma \in \mathcal{V}_a(f \cdot g)\},
\end{align}
where the set $\mathcal{V}_a(f \cdot g)$ is defined in Definition \ref{approximation}. Let us denote by $\overline{f}, \overline{g}$ the real analytic function germs defined by $\overline{f}(x, y) := f(x, -y)$ and $\overline{g}(x, y) := g(x, -y)$, and set $\mathscr{L}_{-}(V_f, V_g) := \mathscr{L}_{+}(V_{\overline{f}}, V_{\overline{g}})$. Now, we establish the formula for the separation \L ojasiewicz exponent $\mathscr{L}(V_f, V_g)$.
\begin{theorem}\label{separation-theorem}
	Let $f, g \colon (\mathbb{R}^2,0) \to (\mathbb{R},0)$ be non-zero reduced real analytic function germs, which are mini-regular in $x$ of order $m$. Then
	$$\mathscr{L}(V_f, V_g) = \max\{\mathscr{L}_{+}(V_f, V_g), \mathscr{L}_{-}(V_f, V_g)\},$$ where $\mathscr{L}_{-}(V_f, V_g) := \mathscr{L}_{+}(V_{\overline{f}}, V_{\overline{g}})$ and $\mathscr{L}_{+}(V_{\overline{f}}, V_{\overline{g}})$ is defined by \eqref{L+_formula}.
\end{theorem}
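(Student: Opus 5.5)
By \eqref{Eqn10}, $\mathscr{L}(V_f,V_g)$ is the supremum of $\ell(\varphi)$ over semi-algebraic analytic arcs through the origin not contained in $V_f\cap V_g$. The plan is to reduce to arcs $x=\phi(y)$ with $y\ge 0$, which gives $\mathscr{L}_+$, and to arcs with $y\le 0$, which become arcs of the same type for $\overline f,\overline g$ after $y\mapsto -y$ and give $\mathscr{L}_-$. Then, for arcs in $\{y>0\}$, we show $\sup_\phi \ell(\phi)=\max_{\gamma\in\mathcal V_a(fg)}\ell(\gamma)$.

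\emph{Reduction to Puiseux arcs.} Since $f$ and $g$ are mini-regular in $x$, the tangent direction $(1,0)$ plays no special role for the exponent, so we may reparametrize a half-branch $\varphi$ as $x=\phi(y)$ with $y\to 0^+$ and $\mathrm{ord}\,\phi\ge 1$. If necessary we first make a generic linear change of coordinates, and we check that it preserves the quantities involved; this point needs care. If the tangent of $\varphi$ is the $x$-axis, it is not the direction of any branch of $V_f$ or $V_g$, because mini-regularity forces every real branch to have $\mathrm{ord}\ge 1$. Hence $d(\varphi,V_f)\sim\|\varphi\|\sim d(\varphi,V_f\cap V_g)$ and $\ell=1$. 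This is at most $\mathscr{L}_\pm$, because $\ell(\gamma)\ge 1$ for every $\gamma$: since $V_f\cap V_g\subset V_f$, we have $\mathrm{ord}d(\phi,V_f)\ge \mathrm{ord}d(\phi,V_{\gcd})$. For a Puiseux arc, Lemma~\ref{distance} gives \eqref{lphi-separation}. Note that $V_f\cap V_g$ near $0$ is the union of the common real branches, that is, of $V_{\gcd(f,g)}$, together with the origin.

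\emph{Lower bound.} Each $\gamma\in\mathcal V_a(fg)$ is a real Puiseux series with $\mathrm{ord}\,\gamma\ge 1$, so it defines an admissible arc. If $\gamma$ lies in $V_f\cap V_g$ it has $\ell=1$ by convention or is excluded; otherwise $\ell(\gamma)$ is realized by this arc. Therefore $\mathscr{L}\ge \max(\mathscr{L}_+,\mathscr{L}_-)$.

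\emph{Upper bound.} This is the main step. Fix a real $\phi$ and let $\phi_1,\dots,\phi_s$ be the roots of $fg$. Set $\rho:=\max_i \mathrm{ord}(\phi-\phi_i)$, attained at some $\phi_{i_0}$, and let $\gamma$ be the $\rho$-approximation of $\phi_{i_0}$. We claim that $\gamma$, with a generic coefficient, is real and belongs to $\mathcal V_a(fg)$. Indeed, $\phi$ is real and agrees with $\phi_{i_0}$ below order $\rho$, so the truncation is real. To see that $\gamma$ is an approximation $\phi_{i,j}$ for some pair with $\mathrm{ord}(\phi_i-\phi_j)=\rho$, use the conjugate root $\overline{\phi_{i_0}}$. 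If the coefficient of $\phi_{i_0}$ at $y^\rho$ is non-real, then $\mathrm{ord}(\phi_{i_0}-\overline{\phi_{i_0}})=\rho$. If that coefficient is real, then $\phi$ differs from $\phi_{i_0}$ at order $\rho$ and $\gamma$ behaves like a real approximation, which is the case handled by Lemma~\ref{distant-sliding}. The degenerate case where $\phi$ agrees with no root beyond order $\rho$ and all coefficients are real needs separate handling; the subcase $\phi=\phi_{i_0}$ is trivial. This combinatorial step, producing the right pair $(i,j)$ with $\mathrm{ord}(\phi_i-\phi_j)=\rho$, is where I expect the main difficulty. The tool is the tree model: the roots that agree with $\phi$ up to order $<\rho$ and split at $\rho$.

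\emph{Comparing $\phi$ with $\gamma$.} For every root $\beta$ of $f$, of $g$, or of $\gcd(f,g)$, we have $\mathrm{ord}(\beta-\phi)\le\rho$. The same holds for $\gamma$, whose coefficient at $y^\rho$ is generic. Any difference of order $<\rho$ is unchanged between $\phi$ and $\gamma$, since they agree below $\rho$. Differences of order exactly $\rho$ remain of order $\rho$ for $\gamma$ by genericity. This is Lemma~\ref{distant-sliding}(ii), applied to $f$, to $g$ and to $\gcd(f,g)$, where only the real roots matter in $\mathrm{ord}d$. Hence $\mathrm{ord}d(\gamma,V_h)=\mathrm{ord}d(\phi,V_h)$ for $h\in\{f,g,\gcd\}$, so $\ell(\phi)=\ell(\gamma)\le\mathscr{L}_+$. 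The case of arcs with $y<0$ gives $\mathscr{L}_-$ by the substitution $y\mapsto -y$, and combining the two bounds proves the theorem.
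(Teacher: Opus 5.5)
Your lower bound, your handling of arcs tangent to the $x$-axis, and your upper bound in the \emph{non-degenerate} situation are fine. Let $S$ be the set of roots $\phi_i$ of $fg$ with $\mathrm{ord}(\phi-\phi_i)=\rho$, and write $\phi_{<\rho}$ for the truncation of $\phi$ below $y^\rho$. Suppose the coefficients at $y^\rho$ of the members of $S$ are not all equal to one real number. Then either two of these coefficients differ, or one is non-real and its root can be paired with its conjugate. In both cases $\gamma=\phi_{<\rho}+cy^\rho$ really is some $\phi_{i,j}\in\mathcal V_a(fg)$, and genericity gives $\ell(\gamma)=\ell(\phi)$.

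The gap is the remaining case, which you flag but leave open, and there your claim $\gamma\in\mathcal V_a(fg)$ is false. Suppose every member of $S$ has the same real coefficient $c^*$ at $y^\rho$. Then two members of $S$ have contact $>\rho$, while a member of $S$ and a root outside $S$ have contact $<\rho$. So no pair $\phi_i,\phi_j$ with $\phi_i$ agreeing with $\phi$ below $y^\rho$ has contact exactly $\rho$. For example, $f=x-y^2$, $g=x-y^2-y^3$, $\phi=0$ give $\rho=2$ and $\gamma=cy^2$, whereas $\mathcal V_a(fg)=\{y^2+cy^3\}$. In this case $\ell(\gamma)=\ell(\phi)$ tells you nothing, because $\gamma$ is not admissible in \eqref{L+_formula}. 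Lemma~\ref{distant-sliding} preserves orders, not membership in $\mathcal V_a(fg)$.

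This degenerate case is exactly what the paper's proof handles. Lemma~\ref{claim1} shows that an arc with $\ell(\phi)>\mathscr L_+(V_f,V_g)$ must have top edges of $\mathbb P(f,\phi)$ and $\mathbb P(g,\phi)$ of equal slope $\rho$, and $\mathcal E_{F_1}\mathcal E_{G_1}$ must have a single real root $c_0$. Part (i) of that lemma disposes of the sub-case where $S$ contains roots of only one of $f,g$, by approximating at the contact order of the other.

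The missing idea is to \emph{move $\phi$ into the cluster} rather than approximate it. Set $\tilde\phi=\phi+c_0y^{\rho}$, whose coefficient at $y^\rho$ is $c^*$. Since $\ell(\phi)>1$ forces $\mathrm{ord}d(\phi,V_{\gcd(f,g)})<\min\{\mathrm{ord}d(\phi,V_f),\mathrm{ord}d(\phi,V_g)\}\le\rho$, no real root of $\gcd(f,g)$ lies in $S$. Hence the denominator of $\ell$ is unchanged while the numerator cannot decrease, so $\ell(\tilde\phi)\ge\ell(\phi)$ (Claim~\ref{claim4}).

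Now iterate. One possibility is that the cluster eventually splits or comes to contain roots of only one of $f,g$. Then your argument, or Lemma~\ref{claim1}(i), yields an element of $\mathcal V_a(fg)$ with $\ell\ge\ell(\phi)$. The other possibility is that it shrinks to a single real common root of $f$ and $g$ lying in the original $S$. That contradicts $\mathrm{ord}d(\phi,V_{\gcd(f,g)})<\rho$. Some induction of this kind on the contact tree is needed to finish your upper bound.
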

First, we need some lemmas. Put $h := \gcd(f,g)$, where $\gcd(f,g)$ is the greatest common division of $f$ and $g$. Let us consider the Newton polygons relative to any arc $x=\phi(y)$, they are $\mathbb{P}(f, \phi)$, $\mathbb{P}(g,\phi)$ and $\mathbb{P}(h,\phi)$ with the highest Newton edges $F_1$, $G_1$ and $H_1$, respectively. Assume that $\mathcal{E}_{F_1}$, $\mathcal{E}_{G_1}$ and $\mathcal{E}_{H_1}$ are their associated polynomials. Let $\theta_{F_1}$, $\theta_{G_1}$ and $\theta_{H_1}$ be the Newton angles of $F_1$, $G_1$ and $H_1$, respectively. It is easy to see that 
	\begin{align}\label{compare-3-tan}
		\min\{\tan\theta_{F_1}, \tan\theta_{G_1}\} \geq \tan\theta_{H_1}.
	\end{align}
	\begin{lemma}\label{claim1} 
		Suppose that $\ell(\phi) > \mathscr{L}_+(V_f, V_g)$. Then 
    \begin{itemize}
        \item[(i)] $\tan\theta_{F_1} = \tan\theta_{G_1}.$
        \item[(ii)] The polynomial $\mathcal{E}_{F_1}\mathcal{E}_{G_1}$ has only one root, which is a real value.
    \end{itemize}
		\end{lemma}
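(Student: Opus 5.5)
We argue by contradiction in both parts. The tool is a perturbation of $\phi$ at the critical exponent. Replacing $\phi$ by $\psi(y)=\phi(y)+c\,y^{\rho}$ with $c$ real and generic should keep $\ell$ unchanged, or at least not decrease it. On the other hand, $\psi$ is a real approximation of two roots of $f\cdot g$, so $\psi\in\mathcal V_a(f\cdot g)$ and $\ell(\psi)\le \mathscr L_+(V_f,V_g)<\ell(\phi)$. The comparison of $\ell(\phi)$ and $\ell(\psi)$ rests on Lemma \ref{distant-sliding}(ii) for each of $f$, $g$ and $h$, together with \eqref{lphi-separation}. Note first that $\phi$ is not a root of $f$ or $g$: otherwise the numerator of $\ell(\phi)$ would be infinite, or $\phi$ would lie in $V_h$. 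We may also assume $h\not\equiv$ const; if $h$ is a unit, the denominator is $1$ by convention.

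\textbf{(i).} Suppose, say, $\rho_F:=\tan\theta_{F_1}<\tan\theta_{G_1}$. Slide $\phi$ along $f$ to a final result $\gamma$, so that $\gamma=\phi+c_0y^{\rho_F}+\cdots$ with $\mathcal E_{F_1}(c_0)=0$. Let $\psi=\phi+c\,y^{\rho_F}$ with $c$ real and generic. Then $\psi$ is the $\rho_F$-approximation of $\gamma$. Because $\rho_F<\tan\theta_{G_1}$, it is also the approximation of $\gamma$ and of a root of $g$ obtained by sliding $\phi$ along $g$, since such a root agrees with $\phi$ beyond order $\rho_F$. If instead it is cleaner, view $\psi$ as the approximation of $\gamma$ and another root $\gamma'$ of $f$ with $\mathrm{ord}(\gamma-\gamma')=\rho_F$ where possible.

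Thus $\psi\in\mathcal V_a(fg)$. By Lemma \ref{distant-sliding}(i) we have $\mathrm{ord}\,d(\phi,V_f)\le\rho_F$, and similarly for $g$ and $h$ using \eqref{compare-3-tan}. Since $c$ is generic, Lemma \ref{distant-sliding}(ii) gives $\mathrm{ord}\,d(\psi,V_\bullet)=\mathrm{ord}\,d(\phi,V_\bullet)$ for $\bullet=f,g,h$. Hence $\ell(\psi)=\ell(\phi)>\mathscr L_+$, a contradiction, so $\tan\theta_{F_1}=\tan\theta_{G_1}$.

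\textbf{(ii).} Let $\rho$ be the common slope. First suppose $\mathcal E_{F_1}\mathcal E_{G_1}$ has two distinct roots $c_1\neq c_2$. Sliding $\phi$ along $f$ or $g$ starting with these roots gives roots $\gamma_1,\gamma_2$ of $fg$ with $\mathrm{ord}(\gamma_1-\gamma_2)=\rho$. Their approximation is $\phi+c\,y^\rho$ with $c$ generic and real, which is a real series because $\phi$ is real. The same argument as in (i) then gives a contradiction. Now suppose there is only one root $c_1$, and it is non-real. Then its conjugate $\bar c_1$ is also a root, since the edge polynomials have real coefficients. So $c_1\neq\bar c_1$ are two roots, which is again the first case. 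Hence the only root is real. Nonzero roots are guaranteed because the edges $F_1,G_1$ are genuine compact edges.

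\textbf{Main obstacle.} The delicate step is the application of Lemma \ref{distant-sliding}(ii). It needs $\rho\ge\mathrm{ord}\,d(\psi,V_\bullet)$ with $c$ generic for each of $f$, $g$ and $h$, so that $\ell(\psi)=\ell(\phi)$. I would get this from part (i) of that lemma combined with \eqref{compare-3-tan}: $\rho\ge\tan\theta_{H_1}\ge\mathrm{ord}\,d(\phi,V_h)$. A degenerate case arises when $h$ has no real roots; there the denominator equals $1$ for both $\phi$ and $\psi$, so the equality still holds.
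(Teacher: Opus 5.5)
\textbf{Verdict.} Your strategy is the paper's: perturb $\phi$ at the relevant slope by a generic real coefficient, observe that the perturbed arc is a real approximation in $\mathcal{V}_a(fg)$, and show that $\ell$ does not change. Part (ii) is fine. There both slopes equal $\rho$, so Lemma \ref{distant-sliding}(i) and \eqref{compare-3-tan} give $\mathrm{ord}\,d(\phi,V_\bullet)\le\rho$ for $\bullet=f,g,h$. Your conjugate-pair remark also correctly supplies why the unique root is real, which the paper leaves implicit. Part (i), however, has a gap.

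\textbf{The gap in (i).} With $\rho_F=\tan\theta_{F_1}<\tan\theta_{G_1}$ you perturb at $\rho_F$ and claim $\mathrm{ord}\,d(\psi,V_g)=\mathrm{ord}\,d(\phi,V_g)$, ``similarly, using \eqref{compare-3-tan}''. But \eqref{compare-3-tan} only controls $H_1$. For $g$, Lemma \ref{distant-sliding}(i) gives only $\mathrm{ord}\,d(\phi,V_g)\le\tan\theta_{G_1}$, and this bound may exceed $\rho_F$. In that case the equality is false. If $\beta$ is a real root of $g$ with $\mathrm{ord}(\phi-\beta)>\rho_F$, then $\mathrm{ord}(\psi-\beta)=\rho_F$ for every $c\neq 0$, so the distance to $V_g$ drops.

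For example, take $\phi\equiv 0$, $f=x-y^2$, $g=x-y^3$. Then $\rho_F=2$ and $\mathrm{ord}\,d(\phi,V_g)=3$, but $\psi=cy^2$ has $\mathrm{ord}\,d(\psi,V_g)=2$.

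\textbf{How to repair it.} The conclusion $\ell(\psi)=\ell(\phi)$ still holds, but it needs the case split the paper makes.
\begin{itemize}
\item If $\mathrm{ord}\,d(\phi,V_g)\le\rho_F$, your direct argument works.
\item If $\mathrm{ord}\,d(\phi,V_g)>\rho_F$, then $\mathrm{ord}\,d(\psi,V_g)\ge\rho_F\ge\mathrm{ord}\,d(\phi,V_f)=\mathrm{ord}\,d(\psi,V_f)$. So the minimum in the numerator of \eqref{lphi-separation} equals $\mathrm{ord}\,d(\phi,V_f)$ for both arcs. The $f$- and $h$-terms are unchanged by Lemma \ref{distant-sliding}(ii), since $\rho_F\ge\tan\theta_{H_1}$.
\end{itemize}
The point is that the distance to the set with the larger slope may change, but it becomes irrelevant because it is no longer the minimum.

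\textbf{A side remark.} Your justification that $\phi$ is not a root of $f$ is incorrect. You say the numerator of $\ell(\phi)$ would otherwise be infinite, but if $\phi$ is a root of $f$ and not of $g$, the minimum is finite. The lemma simply presupposes that $\phi$ is a root of neither $f$ nor $g$, so that $F_1$ and $G_1$ are defined.
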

	\begin{proof}
    By contradiction we assume that $\tan\theta_{F_1} \neq \tan\theta_{G_1}.$ Without loss of generality suppose that $\tan\theta_{F_1} > \tan\theta_{G_1}.$ Let $\phi_{1,\infty}$ and $\phi_{2,\infty}$ be final results of sliding $\phi$ along $f$ and $g$, respectively. Then we have 
	\begin{align}
		\phi_{1,\infty}(y) &= \phi(y) + \sum_{i \ge 1}a_iy^{\alpha_i}, a_1\neq 0\\
		\phi_{2,\infty}(y) &= \phi(y) + \sum_{i \ge 1}b_iy^{\beta_i}, b_1\neq 0,
	\end{align}
	where $\tan\theta_{F_1}=\alpha_1 <\alpha_2 < \ldots$ and $\tan\theta_{G_1} =\beta_1 < \beta_2 <
    \ldots $.
    
    Let $\phi_{1,2}$ be the approximation of $\phi_{1,\infty}$ and $\phi_{2,\infty}$. Then $\phi_{1,2}\in \mathcal{V}_a(fg)$ and
	$$ \phi_{1,2}(y) = \phi(y) + cy^{\tan\theta_{G_1}} + o(\tan\theta_{G_1}), $$ where $c \in \mathbb{R}$ is generic. It follows from Lemma \ref{distant-sliding} that
    	\begin{align}\label{order-with-Vg}
    		\mathrm{ord}d(\phi_{1,2}, V_g) = \mathrm{ord}d(\phi, V_g) \text{ and } \mathrm{ord}d(\phi_{1,2}, V_h) = \mathrm{ord}d(\phi, V_h).
    	\end{align}
    	 
We will show that $\ell(\phi_{1,2})=\ell(\phi)$ by considering the following two cases.

\begin{itemize}
	\item Case 1: 	$\mathrm{ord}d(\phi, V_f) \leq \tan\theta_{G_1}$. Then $\mathrm{ord}\ d(\phi_{1,2}, V_f) = \mathrm{ord}\ d(\phi, V_f)$ due to Lemma \ref{distant-sliding}. Hence $\ell(\phi_{1,2})=\ell(\phi)$.
    
	\item Case 2: $\mathrm{ord}d(\phi, V_f) > \tan\theta_{G_1}$. Take a real Newton-Puiseux root $\gamma$ of $f$ satisfying
	\begin{align*}
		\mathrm{ord}(\phi - \gamma) = \mathrm{ord}\ d(\phi, V_f).
	\end{align*}
    Since
    $$ \phi_{1,2}(y)- \gamma(y) =\phi(y)-\gamma(y) + cy^{\tan\theta_{G_1}} + o(\tan\theta_{G_1}), $$
     it follows that $\mathrm{ord}d(\phi_{1,2} - \gamma) = \tan\theta_{G_1}$. Therefore,
	\begin{align}\label{order-with-Vf-tan}
		\mathrm{ord}d(\phi_{1,2}, V_f) \ge \tan\theta_{G_1}\ge \mathrm{ord}d(\phi_{1,2}, V_g).
	\end{align}
	Hence,
	  		\begin{align*}
			\ell(\phi_{1,2}) &= \frac{\min\{\mathrm{ord}d(\phi_{1,2},V_f), \mathrm{ord}d(\phi_{1,2},V_g)\}}{\mathrm{ord}d(\phi_{1,2},V_h)} \\
			&= \frac{ \mathrm{ord}d(\phi,V_g) }{\mathrm{ord}d(\phi,V_h)}  =\ell(\phi).
		\end{align*} 
		
\end{itemize}
The equality $\ell(\phi_{1,2})=\ell(\phi)$ contradicts the assumption that $\ell(\phi) > \mathscr{L}_+(V_f, V_g)$.  This proves (i).

(ii) By (i) one has $\rho=\tan \theta_{F_1} = \tan\theta_{G_1}$. Then $\mathcal{E}_{F_1}\mathcal{E}_{G_1}$ is the polynomial associated with the highest Newton edge of $\mathbb{P}(fg, \phi)$ with the angle $\rho$. Suppose by contradiction that $\mathcal{E}_{F_1}\mathcal{E}_{G_1}$ has two distinct roots $c_1,c_2$. Let $\varphi_i=\phi+c_i y^{\rho }$ for each $i=1,2$, and let $\varphi_{i,\infty}$ be a final result of sliding $\varphi_i$ along $fg$. Let $\varphi_{1,2}$ be the approximation of $\varphi_{1,\infty}$ and $\varphi_{2,\infty}$. Then $\varphi_{1,2}\in \mathcal{V}_a(fg)$ and 
	$$ \varphi_{1,2}(y) = \phi(y) + cy^{\rho} + o(\rho), $$ where $c \in \mathbb{R}$ is a generic number. Note that 
    $$\tan \theta_{H_1}\leq \tan \theta_{F_1}=\tan \theta_{G_1}=\rho.$$
    It follows from Lemma \ref{distant-sliding} that
    $$\mathrm{ord}d(\varphi_{1,2},V_f)=\mathrm{ord}d(\phi,V_f),\ \mathrm{ord}d(\varphi_{1,2},V_g)=\mathrm{ord}d(\phi,V_g) $$
	and
    $$d(\varphi_{1,2},V_h)=\mathrm{ord}d(\phi,V_h).$$
    Hence $\ell(\phi_{1,2})=\ell(\phi)$, which is a contradiction. The lemma is proved.    
\end{proof}

\begin{proof}[Proof of Theorem \ref{separation-theorem}]
	By the formula \eqref{Eqn10}, it is obvious that $$ \mathscr{L}(V_f, V_g) \geq \max\{\mathscr{L}_{+}(V_f, V_g), \mathscr{L}_{-}(V_f, V_g)\}. $$
	
	Suppose for contradiction that $$ \mathscr{L}(V_f, V_g) > \max\{\mathscr{L}_{+}(V_f, V_g), \mathscr{L}_{-}(V_f, V_g)\}. $$
	Then there is a real analytic arc $\phi$ passing through the origin and not lying the $x$-axis such that 
	\begin{align}\label{contradicted-assumption}
	 \ell(\phi) > \max\{\mathscr{L}_{+}(V_f, V_g), \mathscr{L}_{-}(V_f, V_g)\}	
	\end{align}
	and the parameter forms of $\phi$ is 
	$$ (x = \phi(t), y = t)\ \text{or}\ (x = \phi(t), y = -t),\ \text{where}\ \phi(t) \in \mathbb{R}\{t^{1/N}\}. $$
    Without loss of generality, assume that $\phi$ can be parametrized by $(x = \phi(t), y = t)$. 
    By Lemma \ref{claim1}, then $\theta_{F_1} = \theta_{G_1}$ and the polynomial $\mathcal{E}_{F_1}\mathcal{E}_{G_1}$ has a unique root $c_0 \in \mathbb{R}$. Define 
$$ \tilde\phi(y) := \phi(y) + c_0y^{\tan\theta_{F_1}}. $$
\begin{claim}\label{claim4} We have
	$$\ell(\tilde\phi) \ge \ell(\phi).$$
\end{claim}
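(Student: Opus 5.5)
We want to show $\ell(\tilde\phi)\ge\ell(\phi)$, where $\tilde\phi=\phi+c_0y^{\rho}$, $\rho:=\tan\theta_{F_1}=\tan\theta_{G_1}$, and $c_0$ is the unique (real) root of $\mathcal{E}_{F_1}\mathcal{E}_{G_1}$. The plan is to show that the numerator of $\ell$ does not decrease and the denominator does not change when passing from $\phi$ to $\tilde\phi$.

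\textbf{Denominator.} Recall \eqref{compare-3-tan}: $\tan\theta_{H_1}\le\rho$. We split into two cases.
\begin{itemize}
\item If $\tan\theta_{H_1}<\rho$, then Lemma \ref{lemma-sliding}(i) gives $\mathbb{P}(h,\tilde\phi)=\mathbb{P}(h,\phi)$.
\item If $\tan\theta_{H_1}=\rho$, then $\mathcal{E}_{H_1}$ divides $\mathcal{E}_{F_1}$, because $h\mid f$ and edge polynomials are multiplicative. Hence $\mathcal{E}_{H_1}$ is either constant or a power of $(z-c_0)$ times a monomial.
\end{itemize}
In either case we compare real roots. Let $\beta$ be any real Newton--Puiseux root of $h$. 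If $\operatorname{ord}(\phi-\beta)<\rho$, then $\operatorname{ord}(\tilde\phi-\beta)=\operatorname{ord}(\phi-\beta)$. If $\operatorname{ord}(\phi-\beta)\ge\rho$, then $\beta=\phi+c'y^\rho+\cdots$, where $c'$ is a root of $\mathcal{E}_{H_1}$ or $c'=0$, and then $\operatorname{ord}(\tilde\phi-\beta)\ge\rho\ge\operatorname{ord}d(\phi,V_h)$, the last inequality by Lemma \ref{distant-sliding}(i). These estimates give $\operatorname{ord}d(\tilde\phi,V_h)\ge\operatorname{ord}d(\phi,V_h)$, but the claim needs equality, or at least control of the denominator.

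This is where I expect the main obstacle. A real root $\beta$ of $h$ tangent to $\tilde\phi$ beyond order $\rho$ would increase the denominator. To handle it I would exploit $h\mid f$ and $h\mid g$: any root of $h$ is a root of both $f$ and $g$. So if $\operatorname{ord}d(\tilde\phi,V_h)$ increases to some value $\sigma>\rho$, then both $\operatorname{ord}d(\tilde\phi,V_f)$ and $\operatorname{ord}d(\tilde\phi,V_g)$ are at least $\sigma$. I would then compare ratios rather than the individual terms.

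\textbf{Numerator.} The key point is Lemma \ref{distant-sliding}(i) applied to $\phi$: $\operatorname{ord}d(\phi,V_f)\le\rho$ and $\operatorname{ord}d(\phi,V_g)\le\rho$. In the generic situation, with $\operatorname{ord}d(\phi,V_h)<\rho$, I would argue as follows.
\begin{itemize}
\item Every real root $\beta$ of $f$ satisfies $\operatorname{ord}(\tilde\phi-\beta)\ge\min(\operatorname{ord}(\phi-\beta),\rho)=\operatorname{ord}(\phi-\beta)$, and similarly for $g$. So the numerator does not decrease.
\item Roots of $h$ with $\operatorname{ord}(\phi-\beta)<\rho$ keep their order. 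A root of $h$ with $\operatorname{ord}(\phi-\beta)\ge\rho$ would force $\operatorname{ord}d(\phi,V_h)\ge\rho$, contradicting the case assumption. So the denominator is unchanged.
\end{itemize}

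In the remaining case $\operatorname{ord}d(\phi,V_h)=\rho$, we have $\operatorname{ord}d(\phi,V_f)=\operatorname{ord}d(\phi,V_g)=\rho$, so $\ell(\phi)=1$. Then it suffices that $\ell(\tilde\phi)\ge1$, which holds because the numerator is $\ge$ the denominator whenever $V_h\subset V_f\cap V_g$.

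Putting the two cases together gives $\ell(\tilde\phi)\ge\ell(\phi)$.
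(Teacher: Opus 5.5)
Your final argument is correct and is essentially the paper's. Lemma~\ref{distant-sliding}(i) gives $\mathrm{ord}\,d(\phi,V_f),\ \mathrm{ord}\,d(\phi,V_g)\le\rho$, so $\mathrm{ord}(\tilde\phi-\beta)\ge\mathrm{ord}(\phi-\beta)$ for every real root $\beta$ of $f$ or $g$ and the numerator cannot drop; and $\mathrm{ord}\,d(\phi,V_h)<\rho$ leaves the order to every real root of $h$ unchanged, so the denominator stays fixed.

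The only difference is that the paper gets $\mathrm{ord}\,d(\phi,V_h)<\rho$ directly from $\ell(\phi)>1$, which the contradiction hypothesis supplies. So your extra case $\mathrm{ord}\,d(\phi,V_h)=\rho$ never arises there, and your exploratory ``Denominator'' paragraph is not needed. In that extra case $\tilde\phi$ may even be a root of $h$, and then $\ell(\tilde\phi)$ is not defined.
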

\begin{proof}
We first see that $$\min\{\mathrm{ord}d(\phi, V_f),\  \mathrm{ord}d(\phi, V_g)\} > \mathrm{ord}d(\phi, V_f \cap V_g)=\mathrm{ord}d(\phi, V_h),$$
because $\ell(\phi)>1$. Then $$\tan\theta_{F_1}\geq \mathrm{ord}d(\phi, V_f)> \mathrm{ord}d(\phi, V_h).$$
Applying Lemma \ref{distant-sliding} and its proof, one has
$$\mathrm{ord}d(\tilde\phi, V_h)=\mathrm{ord}d(\phi, V_h),\ \mathrm{ord}d(\tilde\phi, V_f)\geq \mathrm{ord}d(\phi, V_f) \text{ and } \mathrm{ord}d(\tilde\phi, V_g)\geq\mathrm{ord}d(\phi, V_g).$$
Hence $\ell(\tilde\phi) \ge \ell(\phi).$
\end{proof}

Consider Newton polygons $\mathbb{P}(f,\tilde\phi)$ and $\mathbb{P}(g,\tilde\phi)$. The polygon $\mathbb{P}(f,\tilde\phi)$ has the highest Newton edge $\widetilde{F}_1$ with angle $\theta_{\widetilde{F}_1}$ and the associated polynomial $\mathcal{E}_{\widetilde{F}_1}$; the polygon $\mathbb{P}(g,\tilde\phi)$ has the highest Newton edge $\widetilde{G}_1$ with angle $\theta_{\widetilde{G}_1}$ and the associated polynomial $\mathcal{E}_{\widetilde{G}_1}$. Combining Claim \ref{claim4} and Lemma \ref{claim1} we obtain 

\begin{claim}\label{claim3}
	If $\tilde\phi$ is not a Newton--Puiseux root of $h=0$, then
	\begin{enumerate}
		\item $ \tan\theta_{\widetilde{F}_1} = \tan\theta_{\widetilde{G}_1}$;
		\item The polynomial $\mathcal{E}_{\widetilde{F}_1}\mathcal{E}_{\widetilde{G}_1}$ has only real root.
	\end{enumerate}
\end{claim}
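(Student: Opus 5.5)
The plan is to apply Lemma \ref{claim1} to the new arc $\tilde\phi$ in place of $\phi$. The two conclusions of that lemma, applied to $\mathbb{P}(f,\tilde\phi)$ and $\mathbb{P}(g,\tilde\phi)$, are exactly items (1) and (2) of the claim. So it suffices to check that the lemma's hypotheses hold for $\tilde\phi$.

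First I would check that $\tilde\phi$ is a legitimate real arc for the lemma. It is real, because $\phi\in\mathbb{R}\{t^{1/N}\}$ and $c_0\in\mathbb{R}$ by Lemma \ref{claim1}(ii). Its order satisfies $\mathrm{ord}\,\tilde\phi\ge 1$, or at least it stays in the same regime as $\phi$: we have $\tan\theta_{F_1}\ge\mathrm{ord}\,d(\phi,V_f)>\mathrm{ord}\,d(\phi,V_h)\ge 1$, so adding $c_0y^{\tan\theta_{F_1}}$ does not lower the order below $\min\{\mathrm{ord}\,\phi,\tan\theta_{F_1}\}$. The lemma also needs $\tilde\phi$ not to be a root of $f$ or $g$, so that the Newton polygons have a dot on $X=0$ and the edges $\widetilde F_1,\widetilde G_1$ are defined. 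If $\tilde\phi$ were a root of $f$ (say) but not of $h$, I would argue as follows. The hypothesis gives $\mathrm{ord}\,d(\tilde\phi,V_h)<\infty$, so $\ell(\tilde\phi)$ would still be finite, and the situation is then handled by treating the missing polygon as degenerate. I expect, however, that the intended reading implicitly assumes $\tilde\phi$ is a root of neither, and I would state this reduction explicitly.

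Next comes the key inequality. By Claim \ref{claim4},
$$\ell(\tilde\phi)\ge\ell(\phi)>\max\{\mathscr{L}_+(V_f,V_g),\mathscr{L}_-(V_f,V_g)\}\ge\mathscr{L}_+(V_f,V_g),$$
using \eqref{contradicted-assumption}. This is precisely the hypothesis $\ell(\tilde\phi)>\mathscr{L}_+(V_f,V_g)$ of Lemma \ref{claim1}. Applying the lemma with $\tilde\phi$ in place of $\phi$ gives $\tan\theta_{\widetilde F_1}=\tan\theta_{\widetilde G_1}$, and also that $\mathcal{E}_{\widetilde F_1}\mathcal{E}_{\widetilde G_1}$ has a single root, which is real.

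The main obstacle is making sure the proof of Lemma \ref{claim1} genuinely applies to $\tilde\phi$, and not only to the original $\phi$. Inside that proof, the comparison $\min\{\tan\theta_{F_1},\tan\theta_{G_1}\}\ge\tan\theta_{H_1}$ from \eqref{compare-3-tan} and Lemma \ref{distant-sliding} are used. Both hold for any real arc that is not a root of $h$, which is exactly the hypothesis of the claim. In particular, $\mathrm{ord}\,d(\tilde\phi,V_h)$ must be finite so that $\ell(\tilde\phi)$ is a finite ratio. This uses the hypothesis directly, since $\mathrm{ord}\,d(\tilde\phi,V_h)=\mathrm{ord}\,d(\phi,V_h)$ by the proof of Claim \ref{claim4}.
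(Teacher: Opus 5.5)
Your argument is correct and is the paper's own: Claim~\ref{claim4} together with \eqref{contradicted-assumption} gives $\ell(\tilde\phi)\ge\ell(\phi)>\mathscr{L}_+(V_f,V_g)$, and Lemma~\ref{claim1} applied to $\tilde\phi$ then yields both items. Your ``degenerate polygon'' remark is not an argument, but it is not needed: the claim presupposes that $\widetilde F_1,\widetilde G_1$ exist, and the paper leaves this implicit as well. If you want to close that case anyway, note that a $\tilde\phi$ which is a real root of $f$ but not of $g$ is ruled out by Case~2 of the proof of Lemma~\ref{claim1}, run with $\tan\theta_{\widetilde F_1}=+\infty$.
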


Let $\phi_0:=\phi$ and $$\phi_1(y)=\tilde\phi(y) = \phi_0(y) + c_0y^{\tan\theta_{F_1}}.$$ 
Then $\phi_1$ is the unique sliding of $\phi_0$ along $f$ (as well as $g$ and $h$). Applying this process, we obtain a sequence 
\begin{align}
    \phi \to \gamma_1 \to \gamma_2 \to \dots \to \gamma_n \to \dots \to \phi_\infty,
\end{align}
where $\phi_{k+1}$ be the unique sliding of $\phi_k$ along $h$ for each $k\geq 1$. The series $\phi_\infty$ is a real Newton-Puiseux root of $h$. It yields the following identities
$$\mathrm{ord}d(\phi, V_f)=\mathrm{ord}d(\phi, V_g)=\mathrm{ord}d(\phi, V_h)=\mathrm{ord}(\phi-\phi_\infty)=\tan \theta_{F_1}.$$
This implies $\ell(\phi)=1$, which is a contradiction. So, the theorem is proved.
\end{proof}

\section{Effective estimation for separation \L ojasiewicz exponent}\label{section4}
Suppose that $f$ and $g$ are polynomials of two real variables, then there is the following effective version of the separation \L ojasiewicz exponent.
\begin{theorem}
    Let $f$ and $g$ be polynomials of two real variables such that their degrees are not greater than $d$ and $f(0)=g(0)=0$. Then there exist positive numbers $C$ and $r$ such that 
    \begin{equation}\label{effective-separation}
	d(x, V_f) + d(x, V_g) \geq Cd(x, V_f \cap V_g)^{\frac{(2d-1)^2+1}{2}},\ \text{for all}\ \|x\| \leq r.
\end{equation}
\end{theorem}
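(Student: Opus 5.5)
Our plan is to reduce the effective bound to Theorem \ref{separation-theorem}, which expresses $\mathscr{L}(V_f,V_g)$ as $\ell(\gamma)$ for some real series $\gamma\in\mathcal{V}_a(f\cdot g)$ (or the analogous set for $\overline f,\overline g$). It then suffices to show $\ell(\gamma)\le \frac{(2d-1)^2+1}{2}$ for every such $\gamma$. The inequality \eqref{effective-separation} with this exponent follows, because \eqref{separation-inequality} holds for every exponent strictly larger than the infimum. The equality case needs a separate remark: the infimum is attained, since along each arc the comparison $\sim$ holds with constants.

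First, we normalize. After a generic linear change of coordinates, which preserves degrees and changes distances only up to constants, we may assume $f$ and $g$ are reduced and mini-regular in $x$. Replacing $f,g$ by their square-free parts does not change the zero sets and does not increase degrees. Write $h=\gcd(f,g)$, $f=hf_1$, $g=hg_1$, where $f_1,g_1$ are coprime of degree $\le d$. By \eqref{lphi-separation}, the denominator of $\ell(\gamma)$ is $\mathrm{ord}d(\gamma,V_h)$. If $V_h$ contains only the origin, this equals $1$, since $\mathrm{ord}\gamma\ge 1$ after the generic coordinate change. Otherwise it is at least $1$ anyway. It therefore suffices to bound the numerator $\min\{\mathrm{ord}d(\gamma,V_f),\mathrm{ord}d(\gamma,V_g)\}$ by $\frac{(2d-1)^2+1}{2}$.

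Second, we bound the orders. Take $\gamma=\phi_{i,j}$, the approximation of two roots $\phi_i,\phi_j$ of $fg$ with $\rho=\mathrm{ord}(\phi_i-\phi_j)$. The numerator is at most $\rho$: by Lemma \ref{distant-sliding}, with generic $c$ at exponent $\rho$, the distance order to $V_f$ or $V_g$ cannot exceed $\rho$ unless $\gamma$ agrees with a real root beyond order $\rho$, which genericity rules out. There are two cases.
\begin{itemize}
\item If $\phi_i,\phi_j$ are both roots of $f$ (or both of $g$), then $\rho$ is bounded by a contact order between branches of one curve of degree $\le d$. Using the discriminant $\mathrm{Disc}_x(f)$, of degree $\le d(d-1)$ in $y$, the order $\sum_{k\ne l}\mathrm{ord}(\phi_k-\phi_l)$ is at most $d(2d-1)$. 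This gives $\rho\le d(2d-1)/2$, which is below the target.
\item If $\phi_i$ is a root of $f$ and $\phi_j$ a root of $g$ not shared with $h$, then $\rho$ is bounded by the contact of $V_{f_1}$-type and $V_{g}$-type branches. Here we would use the resultant $\mathrm{Res}_x(fg/h^{?},\cdot)$, or more simply the discriminant of the product $f\cdot g$, which has degree $\le 2d$. Since $\phi_i\neq\phi_j$ are distinct roots of the reduced polynomial $F=\mathrm{rad}(fg)$ of degree $\le 2d$, we get $\mathrm{ord}(\phi_i-\phi_j)\le \mathrm{ord}_y\mathrm{Disc}_x(F)/2\le \frac{2d(2d-1)}{2}$.
\end{itemize}
The sharper value $\frac{(2d-1)^2+1}{2}$ should come from the refined estimate in \cite{Kurdyka2014}-type arguments. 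A Bezout/Newton-polygon bound gives $\mathrm{ord}(\phi_i-\phi_j)\le (\deg F-1)^2/2+\tfrac12$ for a plane curve of degree $\deg F$ with roots in general position, applied with $\deg F\le 2d$. We would prove this via the polar curve $F_x=0$: the contact of two branches of $F$ is at most the intersection multiplicity $(F,F_x)_0\le 2d(2d-1)$ distributed over pairs, and the leading generic-direction term contributes the $\tfrac12$.

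The main obstacle is this last sharp contact bound, namely obtaining exactly $\frac{(2d-1)^2+1}{2}$ rather than a cruder $d(2d-1)$. We would first attempt it via Bezout between $F$ and $F_x$, where $\deg F_x\le 2d-1$. Two roots with contact $\rho$ force the polar to have a branch with contact $\ge\rho$ with both of them. This gives $(F,F_x)_0\ge$ a quantity linear in $\rho$, while $(F,F_x)_0\le \deg F\cdot\deg F_x$ is an upper bound. Balancing these and using that the multiplicity $m\ge 2$ at a genuinely singular point should yield the stated exponent.
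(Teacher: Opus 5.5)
\textbf{Gap in the mixed case.} Your reduction is sound up to $\ell(\gamma)\le\rho:=\mathrm{ord}(\phi_i-\phi_j)$. The decisive step is $\rho\le\frac{(2d-1)^2+1}{2}=2d^2-2d+1$ when $\phi_i$ is a root of $f$ only and $\phi_j$ a root of $g$ only, and this step is not proved. The discriminant of $F=\mathrm{rad}(fg)$ gives only $\rho\le d(2d-1)=2d^2-d$, which exceeds the target for every $d\ge 2$. The polar-curve balancing you propose cannot do better. The identity $(F,F_x)_0=\sum_{k\neq l}\mathrm{ord}(\phi_k-\phi_l)\ge 2\rho+m(m-1)-2$, combined with Bezout for the pair $F,F_x$ (i.e.\ $(F,F_x)_0\le 2d(2d-1)$), returns exactly $\rho\le d(2d-1)$ when $m=2$, and $m=2$ does occur (two smooth branches). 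So the asserted bound $\rho\le((\deg F-1)^2+1)/2$, on which everything rests, is left at ``should yield''. Also, the $\frac12$ in the target is not a generic-direction term. In the paper it comes from extracting a square root of $f^2+g^2=g^2$ on $V_f$.

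\textbf{How to close it.} Along your own line, use a resultant instead of the discriminant of the product. After the generic linear change, $f_1=f/h$ and $g_1=g/h$ are coprime with constant leading coefficients in $x$. Then $\mathrm{Res}_x(f_1,g_1)$ is a nonzero polynomial in $y$ of degree at most $\deg f_1\cdot\deg g_1\le d^2$. Its order at $y=0$ is $\sum_{k,l}\mathrm{ord}(\phi_k-\psi_l)\ge\rho$, because every factor has nonnegative order. Hence $\rho\le d^2\le 2d^2-2d+1$, since $(d-1)^2\ge 0$. If one of the two roots is a root of $h$, then both are roots of $f$ or both of $g$, which is your first case; there $\mathrm{Disc}_x$ actually gives $\rho\le d(d-1)/2$, not $d(2d-1)/2$. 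This yields $\mathscr{L}(V_f,V_g)\le d^2$, strictly below the target when $d\ge 2$. The loosely argued attainment of the infimum then becomes unnecessary (for $d=1$ the sets are lines).

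If you want exactly the stated exponent via $F$, replace Bezout for $(F,F_x)$ by two facts: Teissier's identity $(F,F_x)_0=\mu(F)+m-1$, and $\mu(F)\le(2d-1)^2$ (Bezout for $F_x,F_y$). Together these give $2\rho\le(2d-1)^2-(m-1)^2+2\le(2d-1)^2+1$.

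\textbf{Comparison with the paper.} Either way your route differs from the paper's, which never looks at contact orders. The paper applies the effective bound $(2d-1)^2+1$ for the classical \L ojasiewicz exponent of $f^2+g^2$ from \cite{Nguyen2019}. It then uses $|g(x)|\lesssim d(x,V_g)$ on $V_f\setminus V_g$ and the standard reduction to points of $V_f$.
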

\begin{proof}(cf. \cite{Kurdyka2014})
Put $F := f^2 + g^2$, then $\deg F \le 2d$, where $d = \max\{\deg f, \deg g\}$. Moreover, $V_F = V_f \cap V_g$. 

If $V_f \subset V_g$, then $\mathscr{L}(V_f, V_g) = 1$. Assume that $V_f \not\subset V_g$. Applying the classical \L ojasiewicz inequality to $F$, there exists $\alpha, c > 0$ such that
$$ |F(x)| \ge  c d(x, V_F)^\alpha, \forall x \in U \cap (V_f \setminus V_g),$$
where $U \subset \mathbb{R}^2$ is a neighbourhood of $(0,0)$. 
Let $\widetilde{\mathcal{L}}(F)$ be the classical \L ojasiewicz exponent of the polynomial $F$, then, by \cite[Theorem 4.4]{Nguyen2019}, $\widetilde{\mathcal{L}}(F) \le (2d-1)^2+1$. Hence, the effective optimal version of classical \L ojasiewicz inequality is 
$$ |F(x)| \ge  c d(x, V_F)^{(2d-1)^2+1}, \forall x \in U \cap (V_f \setminus V_g), $$
Note that $x \in U \cap (V_f \setminus V_g)$, then
\begin{align}\label{F and g}
|F(x)| = |g(x)|^2.    
\end{align}
Therefore,
\begin{align}
   |g(x)| \ge  c d(x, V_f \cap V_g)^{\frac{(2d-1)^2+1}{2}}, \forall x \in U \cap (V_f \setminus V_g),
\end{align}
It is easy to see that there exists $C, C' > 0$ such that
\begin{align}\label{sim of d and g}
   Cd(x, V_g) \ge C'|g(x)|. 
\end{align}
Combining \eqref{F and g} with \eqref{sim of d and g}, we obtain 
\begin{align*}
    d(x, V_g) \ge Cd(x, V_f \cap V_g)^{\frac{(2d-1)^2+1}{2}}, \forall x \in U \cap (V_f \setminus V_g).
\end{align*}
This implies inequality \eqref{effective-separation}.
\end{proof}
\begin{remark}
The exponent in inequality \eqref{effective-separation} is sharper than the exponent in \cite[Corollary 8]{Kurdyka2014} in the case of two variables, where $0 \in \mathbb{R}^2$ is not necessarily isolated point of $V_f \cap V_g$. 
\end{remark}

\bibliographystyle{amsalpha}

\end{document}